\documentclass[preprint,12pt]{elsarticle}

\usepackage[T1]{fontenc}
\usepackage{lmodern}
\usepackage{microtype}
\usepackage{amsmath,amssymb,amsthm}
\usepackage{hyperref}

\hypersetup{
  colorlinks=true,
  linkcolor=blue!45!black,
  citecolor=blue!45!black,
  urlcolor=blue!45!black,
  pdftitle={Two Infinite Families of Regular Sequences of Power Sums in Three Variables},
  pdfauthor={Abed Abedelfatah}
}

\journal{Journal of Algebra}
\biboptions{sort&compress}

\newtheorem{theorem}{Theorem}[section]
\newtheorem{proposition}[theorem]{Proposition}
\newtheorem{lemma}[theorem]{Lemma}
\newtheorem{corollary}[theorem]{Corollary}

\newcommand{\C}{\mathbb C}
\newcommand{\PP}{\mathbb P}

\allowdisplaybreaks

\begin{document}

\begin{frontmatter}

\title{Two Infinite Families of Regular Sequences of Power Sums in Three Variables}

\author[braude]{Abed Abedelfatah\corref{cor1}}
\ead{abed@braude.ac.il}
\cortext[cor1]{Corresponding author.}
\address[braude]{Department of Mathematics, Braude College of Engineering, Karmiel, Israel}

\begin{abstract}
Let
$$
S=\C[x,y,z],\qquad p_m=x^m+y^m+z^m.
$$
For every $r\geq1$, we prove that
$$
p_r,\ p_{r+1},\ p_{3r+1}
\qquad\text{and}\qquad
p_r,\ p_{r+1},\ p_{3r+2}
$$
are regular sequences exactly when $r\not\equiv1\pmod3$. This proves the
Conca--Krattenthaler--Watanabe prediction for two infinite families. The proof
is elementary. Four simple reductions modulo $(p_r,p_{r+1})$ reduce the
problem to the two equations $x+y+z=0$ and $xy+xz+yz=0$.
\end{abstract}

\begin{keyword}
power sums \sep regular sequences \sep complete intersections \sep symmetric polynomials
\MSC[2020] 13C40 \sep 13A02 \sep 05E05 \sep 13P10
\end{keyword}

\end{frontmatter}

\section{Introduction}

For $m\geq1$, write
$$
p_m=x^m+y^m+z^m\in S=\C[x,y,z].
$$
Conca, Krattenthaler, and Watanabe conjectured that, if
$$
0<a<b<c,\qquad \gcd(a,b,c)=1
$$
then $p_a,p_b,p_c$ form a regular sequence exactly when
$$
6\mid abc
$$
see \cite{CKW}. The necessity is known, while the converse is still open in
general. Some special cases were proved in \cite{CKW,Chen,KumarMartino,CSS}.

In particular, the ratio-three case in \cite[Proposition~2.13]{CKW} already
covers
$$
\{r,r+1,3r\},\qquad \{r,r+1,3r+3\}.
$$
Here we prove the two families between them.

\begin{theorem}\label{thm:main}
Let $r\geq1$ and $\varepsilon\in\{1,2\}$. Then
$$
p_r,\ p_{r+1},\ p_{3r+\varepsilon}
$$
form a regular sequence in $\C[x,y,z]$ if and only if
$$
r\not\equiv1\pmod3.
$$
\end{theorem}

Since $r(r+1)$ is always even, Theorem~\ref{thm:main}, together with the two
ratio-three cases, gives the following form of the conjectured divisibility
condition.

\begin{corollary}\label{cor:four-shifts}
Let $r\geq1$ and $j\in\{0,1,2,3\}$. Then
$$
p_r,\ p_{r+1},\ p_{3r+j}
$$
form a regular sequence if and only if
$$
6\mid r(r+1)(3r+j).
$$
\end{corollary}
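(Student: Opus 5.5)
The plan is a case analysis on $j$. In every case I reduce the divisibility condition $6\mid r(r+1)(3r+j)$ to a condition on $r$ modulo $3$. Then I match that condition against Theorem~\ref{thm:main} when $j\in\{1,2\}$, and against the ratio-three results of \cite[Proposition~2.13]{CKW} when $j\in\{0,3\}$.

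First, the factor $2$ is never an obstruction, because $r(r+1)$ is always even. Hence $6\mid r(r+1)(3r+j)$ holds if and only if $3\mid r(r+1)(3r+j)$.

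For $j\in\{1,2\}$ we have $3r+j\equiv j\not\equiv 0\pmod 3$. So $3$ divides the product exactly when $3\mid r$ or $3\mid r+1$, that is, exactly when $r\not\equiv1\pmod3$. This is precisely the criterion of Theorem~\ref{thm:main} with $\varepsilon=j$, so these two cases are immediate.

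For $j\in\{0,3\}$ the factor $3r+j$ is divisible by $3$, so the divisibility condition holds for every $r\geq1$. It therefore suffices to know that $p_r,p_{r+1},p_{3r}$ and $p_r,p_{r+1},p_{3r+3}$ are regular sequences for all $r\geq1$. This is the content of the ratio-three cases \cite[Proposition~2.13]{CKW}, quoted in the introduction.

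The only point needing care is to check that the cited proposition applies to every $r$ with no exceptional residues. One also has to handle the small cases. For instance, $r=1,j=0$ gives $\{1,2,3\}$, which is regular because $p_1,p_2,p_3$ generate the same ideal as the elementary symmetric polynomials. Here I would verify that the hypotheses of \cite[Proposition~2.13]{CKW} are met:
\begin{itemize}
\item the strict ordering $r<r+1<3r+j$ holds for all $r\geq1$ and $j\geq0$;
\item $\gcd(r,r+1,3r+j)=1$ holds automatically, because $\gcd(r,r+1)=1$.
\end{itemize}
I expect this bookkeeping to be the only, and minor, obstacle. Everything else is arithmetic modulo $3$.
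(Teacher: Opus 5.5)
Your proof is correct. For $j\in\{1,2\}$ it coincides with the paper's: you use that $r(r+1)$ is even to reduce $6\mid r(r+1)(3r+j)$ to a condition modulo $3$, observe that it becomes $r\not\equiv1\pmod3$, and invoke Theorem~\ref{thm:main}.

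The difference is in the cases $j\in\{0,3\}$. You outsource them to \cite[Proposition~2.13]{CKW}. The paper instead proves them directly from its own Proposition~\ref{prop:reductions}. Modulo $(p_r,p_{r+1})$ one has $p_{3r}\equiv3e_3^r$ and $p_{3r+3}\equiv3e_3^{r+1}$. By Lemma~\ref{lem:torus}, $e_3\neq0$ at every common projective zero of $p_r,p_{r+1}$. So neither $p_{3r}$ nor $p_{3r+3}$ can vanish there, and Lemma~\ref{lem:projective} gives regularity for every $r\geq1$.

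Your citation is legitimate, since the introduction states that these two families are covered by that proposition. It does, however, leave you with the bookkeeping you flag. The checks you list ($r<r+1<3r+j$ and $\gcd(r,r+1,3r+j)=1$) are the hypotheses of the conjecture, not of the cited proposition. So they do not by themselves confirm that it applies.

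The paper's route makes the corollary self-contained and uniform. All four shifts come from the single reduction in Proposition~\ref{prop:reductions}. Lemma~\ref{lem:torus} does the job of your hypothesis check for every $r$, including $r=1$, so there are no exceptional residues and small cases need no separate treatment.
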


We use the elementary symmetric polynomials
$$
e_1=x+y+z,\qquad e_2=xy+xz+yz,\qquad e_3=xyz.
$$
The proof has two steps. First we compute four power sums modulo
$(p_r,p_{r+1})$. Then we study the two simple loci $e_1=0$ and $e_2=0$.

\section{Four reductions}

We start with two standard facts.

\begin{lemma}[Projective criterion]\label{lem:projective}
Three homogeneous forms of positive degree in $\C[x,y,z]$ form a regular
sequence if and only if they have no common zero in $\PP^2$.
\end{lemma}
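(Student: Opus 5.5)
We deduce the lemma from two standard facts about graded rings: regularity in the graded setting is equivalent to a dimension count, and the projective Nullstellensatz turns that dimension count into a statement about common zeros. Write the three forms as $f_1,f_2,f_3$ and set $I=(f_1,f_2,f_3)$.

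Since $S$ is Cohen--Macaulay and each $f_i$ is homogeneous of positive degree, all three lie in the homogeneous maximal ideal $\mathfrak m=(x,y,z)$. For homogeneous elements of $\mathfrak m$ in a graded Cohen--Macaulay ring, $f_1,f_2,f_3$ is a regular sequence if and only if
$$
\operatorname{ht} I=3,
$$
that is, if and only if $\dim S/I=0$. The forward direction holds because the height of an ideal generated by a regular sequence equals its length. The reverse direction is the unmixedness theorem, applied in the local ring $S_{\mathfrak m}$ and then transferred back to $S$, where every homogeneous prime lies in $\mathfrak m$.

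Next we translate the condition $\dim S/I=0$ into geometry. Because $I$ is homogeneous and proper, $\dim S/I=0$ holds exactly when $\sqrt I=\mathfrak m$. By the (projective) Nullstellensatz, this is equivalent to $V(I)\subseteq\C^3$ being the origin alone. Since $V(I)$ is a cone, that condition says precisely that the $f_i$ have no common zero in $\PP^2$. Conversely, a common projective zero gives a line through the origin contained in $V(I)$. Then $\dim S/I\geq1$, so $\operatorname{ht}I\le 2$, and the sequence is not regular.

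The only step needing care is the equivalence between height $3$ and regularity. Rather than reproving it, we would cite the standard result, for instance that in a graded Cohen--Macaulay ring homogeneous elements of positive degree form a regular sequence if and only if they form part of a system of parameters. With that equivalence in hand, the lemma follows directly from the Nullstellensatz.
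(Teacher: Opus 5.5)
Your proof is correct and follows the same route as the paper: since $S$ is Cohen--Macaulay, regularity becomes the condition $\operatorname{ht} I=3$, and the Nullstellensatz turns that into the absence of a common zero in $\PP^2$. Your version is slightly more careful than the paper's one-line argument, because you explicitly use homogeneity to pass through the local ring $S_{\mathfrak m}$; this step is needed, since in a non-local Cohen--Macaulay ring an ideal of height $n$ generated by $n$ elements need not be generated by a regular sequence in the given order.
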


\begin{proof}
The ring $\C[x,y,z]$ is Cohen--Macaulay of dimension three, so regularity is
equivalent to the ideal having height three. For homogeneous forms of positive
degree, this is the same as having no common projective zero.
\end{proof}

\begin{lemma}\label{lem:torus}
If $[x:y:z]\in\PP^2$ is a common zero of $p_r$ and $p_{r+1}$, then
$$
xyz\neq0.
$$
\end{lemma}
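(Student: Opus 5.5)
By the symmetry of $p_r$ and $p_{r+1}$ under permutations of $x,y,z$, I will only treat the case $z=0$. So suppose $[x:y:0]$ is a common zero of $p_r$ and $p_{r+1}$, and aim for a contradiction. The conditions then read
$$
x^r+y^r=0,\qquad x^{r+1}+y^{r+1}=0 .
$$

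First I rule out the degenerate case where one of $x,y$ also vanishes. If $x=0$, the first equation forces $y^r=0$, hence $y=0$. That contradicts $[x:y:z]$ being a point of $\PP^2$. The same argument excludes $y=0$, so I may assume $xy\neq0$.

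The main step is to combine the two equations to force $x=y$. Substituting $y^r=-x^r$ into the second equation gives
$$
0=x^{r+1}+y\cdot y^r=x^{r+1}-y\,x^r=x^r(x-y).
$$
Since $x\neq0$, this yields $x=y$. Putting this back into the first equation gives $2x^r=0$, so $x=0$, which contradicts $xy\neq0$.

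Hence no coordinate of a common projective zero can vanish, that is, $xyz\neq0$. I do not expect any genuine obstacle here. The only point needing care is to exclude the zero vector at the start, so that the division by $x^r$ is legitimate.
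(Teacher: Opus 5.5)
Your proof is correct and takes essentially the same route as the paper: reduce to $z=0$, rule out a second vanishing coordinate, then combine the two equations to force $x=y$. This is the paper's observation that $\lambda=x/y$ satisfies $\lambda^r=\lambda^{r+1}=-1$, hence $\lambda=1$, done by substitution instead of passing to the ratio.
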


\begin{proof}
Suppose, for example, that $z=0$. If $y\neq0$ and $\lambda=x/y$, then
$$
\lambda^r=-1,\qquad \lambda^{r+1}=-1
$$
so $\lambda=1$, a contradiction. If $y=0$, then also $x=0$. The other cases
are the same.
\end{proof}

The next computation is the main algebraic step.

\begin{proposition}\label{prop:reductions}
Modulo $J_r=(p_r,p_{r+1})$ one has
\begin{align}
p_{3r}&\equiv3e_3^r,\label{eq:red0}\\
p_{3r+1}&\equiv e_1e_3^r,\label{eq:red1}\\
p_{3r+2}&\equiv e_2e_3^r,\label{eq:red2}\\
p_{3r+3}&\equiv3e_3^{r+1}\label{eq:red3}
\end{align}
\end{proposition}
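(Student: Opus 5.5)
The plan is to use Newton's identities, which for three variables read
$$
p_{m+3}=e_1p_{m+2}-e_2p_{m+1}+e_3p_m\qquad(m\ge0),\qquad p_0=3.
$$
I will not run this recurrence upward from small indices. Instead I will exploit the fact that each variable $t\in\{x,y,z\}$ satisfies $t^3=e_1t^2-e_2t+e_3$. Multiplying by $t^{m}$ and summing over the three variables gives the identity above for all $m\ge0$, and it holds in $S$, not only modulo $J_r$.

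\textbf{Step 1: the indices $2r+1,\dots,2r+3$.} First I use the recurrence with $m=r-1$, $m=r-2$, and so on, to work downward and express how the ideal behaves. A cleaner route is to treat powers of an individual variable. Write
$$
t^{r+3}=e_1t^{r+2}-e_2t^{r+1}+e_3t^r.
$$
Summing over $t$ gives
$$
p_{r+3}=e_1p_{r+2}-e_2p_{r+1}+e_3p_r\equiv e_1p_{r+2}\pmod{J_r}.
$$
This alone is not decisive. So I switch to the key device: the polynomial $P(T)=(T-x)(T-y)(T-z)=T^3-e_1T^2+e_2T-e_3$. Modulo $P$, the powers $T^r$ and $T^{r+1}$ reduce to quadratics
$$
T^r\equiv a_0+a_1T+a_2T^2,\qquad T^{r+1}\equiv b_0+b_1T+b_2T^2,
$$
with coefficients in $\C[e_1,e_2,e_3]$. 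Summing over the roots, $p_r=3a_0+a_1p_1+a_2p_2$, and similarly for $p_{r+1}$. This is harder to exploit directly.

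\textbf{Main route.} The most promising approach uses the symmetric-function identity for $\sum_t t^{k}\,t^{l}\,t^{n}$. I would consider the product
$$
p_rp_{r}p_{r}\quad\text{or rather}\quad \prod_t\bigl(\cdot\bigr).
$$
Concretely, I plan to expand $p_r^3$, $p_r^2p_{r+1}$, $p_rp_{r+1}^2$ and $p_{r+1}^3$ in terms of monomial symmetric functions. Each of these lies in $J_r$. Write $X=x^r$, $Y=y^r$, $Z=z^r$, and use the three-variable identity
$$
X^3+Y^3+Z^3-3XYZ=(X+Y+Z)\bigl(X^2+Y^2+Z^2-XY-XZ-YZ\bigr).
$$
This gives $p_{3r}-3e_3^r=p_r\cdot(\cdots)\in J_r$, which is \eqref{eq:red0}.

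For the other three reductions I use the same idea with mixed weights. Let $u_t=t^r$ and $v_t=t^{r+1}=t\,u_t$. The $3\times3$ determinant with rows $(u_x,u_y,u_z)$, $(v_x,v_y,v_z)$ and a third row $(w_x,w_y,w_z)$ provides the tool. Alternatively, I take the polynomial
$$
Q(T)=\prod_{t}(T-u_t)=T^3-p_r\,(\ldots)+\ldots
$$
In general, $\sum_t t^j u_t^3$ can be expressed through the elementary symmetric functions of the $u_t$, namely $\sigma_1=p_r$, $\sigma_2$ and $\sigma_3=e_3^r$, twisted by $t^j$.

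The cleanest version is to compute the determinant
$$
\det\begin{pmatrix}u_x&u_y&u_z\\ xu_x&yu_y&zu_z\\ t\text{-row}\end{pmatrix}.
$$
Instead I will simply verify the identity
$$
\sum_t t^j u_t^3-e_3^r\sum_t t^j\in J_r\quad\text{for } j=1,2
$$
directly. Modulo $p_r$, for each $t$ one has $u_t^2\equiv u_t(u_t-p_r)$. The main obstacle is organizing these mixed sums. I expect a Vandermonde-type elimination to work: regard $(1,t,t^2)$ weighted by $u_t$, note that the vectors $(u_t)$ and $(tu_t)$ are orthogonal to $(1,1,1)$ modulo $J_r$, and deduce that $(t^2u_t)$ lies in a controlled position.

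Finally, \eqref{eq:red3} should follow from \eqref{eq:red0}–\eqref{eq:red2} together with the Newton recurrence,
$$
p_{3r+3}=e_1p_{3r+2}-e_2p_{3r+1}+e_3p_{3r}\equiv (e_1e_2-e_2e_1+3e_3)e_3^r=3e_3^{r+1}.
$$
This also confirms that the recurrence is consistent with the stated normal forms.
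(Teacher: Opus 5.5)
\textbf{There is a genuine gap.} Your proof of \eqref{eq:red0} (factoring $X^3+Y^3+Z^3-3XYZ$ by $X+Y+Z=p_r$) is correct and matches the paper. So is your deduction of \eqref{eq:red3} from the other three via $p_{3r+3}=e_1p_{3r+2}-e_2p_{3r+1}+e_3p_{3r}$.

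The problem is that the two reductions carrying the real content, \eqref{eq:red1} and \eqref{eq:red2}, are never proved. After several abandoned routes you only say you expect a Vandermonde-type elimination to work. The observation that $(u_t)$ and $(tu_t)$ are orthogonal to $(1,1,1)$ modulo $J_r$ merely restates $p_r,p_{r+1}\in J_r$; it does not derive anything.

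Moreover, the identity you set out to verify, $\sum_t t^ju_t^3\equiv e_3^r\sum_t t^j$, is false for $j=2$. It says $p_{3r+2}\equiv p_2e_3^r$ instead of $e_2e_3^r$. For instance, take $r=2$ and let $x,y,z$ be the roots of $T^3-T^2+\tfrac12T-\tfrac16$. Then $p_2=p_3=0$, and Newton's recurrence gives $p_8=\tfrac1{72}=e_2e_3^2$, while $p_2e_3^2=0$. Your final step for \eqref{eq:red3} silently uses the correct form $e_2e_3^r$, so the proposal is also internally inconsistent.

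What is missing is a concrete way to combine $u+v+w\equiv0$ and $xu+yv+zw\equiv0$, where $u=x^r$, $v=y^r$, $w=z^r$ and $uvw=e_3^r$. The paper uses that $w\equiv-u-v$ forces $u^2-vw\equiv v^2-uw\equiv w^2-uv\equiv u^2+uv+v^2$. Hence
\begin{align*}
xu^3+yv^3+zw^3-e_1uvw&=xu(u^2-vw)+yv(v^2-uw)+zw(w^2-uv)\\
&\equiv(xu+yv+zw)(u^2+uv+v^2)\equiv0,
\end{align*}
which is \eqref{eq:red1}. For \eqref{eq:red2} the paper expands $0\equiv(xu+yv+zw)(xu^2+yv^2+zw^2)$. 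The cross terms are $xy\,uv(u+v)+xz\,uw(u+w)+yz\,vw(v+w)$, and $u+v\equiv-w$, etc., turn them into $-e_2uvw$. This leaves $x^2u^3+y^2v^3+z^2w^3\equiv e_2uvw$. With these two identities supplied, the rest of your argument goes through.
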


\begin{proof}
Work modulo $J_r$ and put
$$
u=x^r,\qquad v=y^r,\qquad w=z^r.
$$
Then
$$
u+v+w=0,\qquad xu+yv+zw=0.
$$
Since $u+v+w=0$,
$$
u^3+v^3+w^3=3uvw.
$$
Also
$$
u^2-vw=v^2-uw=w^2-uv=u^2+uv+v^2.
$$
Hence
\begin{align*}
xu^3+yv^3+zw^3-e_1uvw
&=xu(u^2-vw)+yv(v^2-uw)+zw(w^2-uv)\\
&=(xu+yv+zw)(u^2+uv+v^2)=0
\end{align*}
Thus
$$
xu^3+yv^3+zw^3=e_1uvw.
$$
Next,
\begin{align*}
0&=(xu+yv+zw)(xu^2+yv^2+zw^2)\\
&=x^2u^3+y^2v^3+z^2w^3-e_2uvw
\end{align*}
so
$$
x^2u^3+y^2v^3+z^2w^3=e_2uvw.
$$
Finally,
$$
(X-x)(X-y)(X-z)
=
X^3-e_1X^2+e_2X-e_3
$$
so each of $x,y,z$ satisfies
$$
X^3-e_1X^2+e_2X-e_3=0.
$$
Multiplying by $u^3,v^3,w^3$, respectively, and adding, we get
$$
\begin{aligned}
x^3u^3+y^3v^3+z^3w^3
&=e_1e_2uvw-e_2e_1uvw+3e_3uvw\\
&=3e_3uvw.
\end{aligned}
$$
Since $uvw=e_3^r$
$$
x^ju^3+y^jv^3+z^jw^3=p_{3r+j}
$$
for $j=0,1,2,3$, the four formulas follow.
\end{proof}

\section{The two auxiliary cases}

We now prove the two facts needed for Theorem~\ref{thm:main}.

\begin{proposition}\label{prop:e1e2}
For $i=1,2$, the sequence
$$
p_r,\ p_{r+1},\ e_i
$$
is regular if and only if
$$
r\not\equiv1\pmod3.
$$
\end{proposition}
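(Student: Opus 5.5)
By Lemma~\ref{lem:projective}, the statement is equivalent to this: the forms $p_r,p_{r+1},e_i$ have a common zero in $\PP^2$ exactly when $r\equiv1\pmod3$. The candidate zero is $[1:\omega:\omega^2]$, where $\omega$ is a primitive cube root of unity. At this point $e_1=e_2=0$, $e_3=1$, and $p_m=1+\omega^m+\omega^{2m}$ vanishes exactly when $3\nmid m$. If $r\equiv1\pmod3$, then neither $r$ nor $r+1$ is divisible by $3$. So $[1:\omega:\omega^2]$ is a common zero of $p_r,p_{r+1},e_1$ and of $p_r,p_{r+1},e_2$, and neither sequence is regular. The work is the converse: every common zero of $p_r,p_{r+1},e_i$ must be this point up to scaling and permutation, and then the vanishing of $p_r,p_{r+1}$ at it forces $r\equiv1\pmod 3$.

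For the converse I will work with the weighted Newton recursion. Take a common zero and put $u=x^r,v=y^r,w=z^r$ as in Proposition~\ref{prop:reductions}. Lemma~\ref{lem:torus} gives $e_3\neq0$. For $i=1$, the condition $e_1=0$ gives
$$p_m=-e_2p_{m-2}+e_3p_{m-3},\qquad p_0=3,\ p_1=0,\ p_2=-2e_2 .$$
Hence $p_m$ is a weighted-homogeneous polynomial in $(e_2,e_3)$ with weights $2,3$. If $e_2=0$, the recursion gives $p_m=3e_3^{m/3}$ when $3\mid m$ and $p_m=0$ otherwise. Requiring $p_r=p_{r+1}=0$ then gives exactly $3\nmid r$ and $3\nmid r+1$, that is $r\equiv1\pmod 3$, and the point is $[1:\omega:\omega^2]$.

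For $i=2$, note that $e_2=0$ together with $xyz\ne0$ is the same as $e_1=0$ for $(1/x,1/y,1/z)$. Working directly instead, one has $p_m=e_1p_{m-1}+e_3p_{m-3}$ with weights $1,3$. If $e_1=0$ we are back at the same computation. So in both cases it remains to exclude common zeros with $e_1e_2e_3\neq0$ while $e_i=0$.

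That exclusion is the main obstacle. The first tool is linear algebra. The vector $(u,v,w)$ is annihilated under the standard bilinear pairing by both $(1,1,1)$ and $(x,y,z)$. If two coordinates coincide, say $x=y$, then $z^r=-2x^r$ and $z^{r+1}=-2x^{r+1}$ give $z=x$, and then $3x^r=0$, which is impossible. So $x,y,z$ are distinct, and $(x^r,y^r,z^r)$ is proportional to the cross product $(y-z,\,z-x,\,x-y)$. This gives two explicit relations such as $x^r(z-x)=y^r(y-z)$.

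For $i=1$, I will normalize $x=1$ and $y=t$, so that $z=-1-t$. Then the relations become one-variable conditions on $t$, and I want to show they force $t^2+t+1=0$, which is $e_2=0$. I expect a modulus argument to do this. Order the points so that $|x|\ge|y|\ge|z|$ and compare $|x|^r|z-x|$ with $|y|^r|y-z|$, using that $e_1=0$ prevents one coordinate from dominating. The expected conclusion is $|x|=|y|=|z|$. Together with $x+y+z=0$, this puts the three points at the vertices of an equilateral triangle centered at $0$, i.e.\ a scaled $(1,\omega,\omega^2)$.

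The case $i=2$ follows from the same argument applied to the reciprocals $1/x,1/y,1/z$. Equality of moduli is preserved by inversion, and $e_2(x,y,z)=0$ becomes $e_1(1/x,1/y,1/z)=0$. If the modulus estimate turns out to be too weak, my fallback is to eliminate with a resultant of $p_r$ and $p_{r+1}$ in the weighted variables $(e_2,e_3)$, respectively $(e_1,e_3)$, and locate its roots.
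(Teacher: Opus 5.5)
Your proof of necessity and your plan for $i=1$ follow the paper. For $i=1$ you get $(x^r,y^r,z^r)=\lambda(y-z,z-x,x-y)$, and the modulus comparison closes once you supply the identities $|z-x|^2-|y-z|^2=3(|x|^2-|y|^2)$ and $|x-y|^2-|z-x|^2=3(|y|^2-|z|^2)$, both consequences of $x+y+z=0$.

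\textbf{The gap: the case $i=2$.} The substitution $X=1/x$, $Y=1/y$, $Z=1/z$ turns $e_2=0$ into $X+Y+Z=0$. But it turns $p_r=p_{r+1}=0$ into $\sum X^{-r}=\sum X^{-r-1}=0$, not into $p_r(X,Y,Z)=p_{r+1}(X,Y,Z)=0$, so the $i=1$ argument does not apply as stated. If you redo it, you get $(X^{-r-1},Y^{-r-1},Z^{-r-1})\propto(Z-Y,X-Z,Y-X)$, and hence, for example, $(|Y|/|X|)^{r+1}=|Y-Z|/|Z-X|$. Under $|X|\ge|Y|\ge|Z|$, both sides are $\le 1$; for the right side this follows from the same identity applied to $X,Y,Z$. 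The other two pairs behave the same way. The negative exponent reverses exactly the monotonicity that forced equality of moduli for $e_1$, so nothing follows.

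The problem is not just the reciprocal trick: moduli alone cannot settle the case $e_2=0$. Take $x=1$, $y=t$, $z=-t/(1+t)$ with $|t|=1$, and put $c=\operatorname{Re}t$. Then the relations $|x|^r:|y|^r:|z|^r=|y-z|:|z-x|:|x-y|$ reduce to $(2+2c)^{r+1}(2-2c)=5+4c$. Besides $c=-1/2$, this equation has a root in $(0,1)$, and that root is compatible with $|x|\ge|y|\ge|z|$.

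So you need phase information; this is the missing idea. The paper proceeds as follows.
\begin{itemize}
\item It first gets $|x|=|y|$ from a different inequality, $|2t+1|^2-\rho^2|t+2|^2=(1-\rho^2)(1+\rho^2+4\operatorname{Re}t)\ge 0$.
\item It then shows that $T=-e_3/e_1^3=s/(s-1)^3$, with $s=|1+t|^2>1$, is a positive real number. Here $e_1\ne 0$ comes from the case $i=1$.
\item Consequently $a_n=p_n/e_1^n$ obeys the real recurrence $a_n=a_{n-1}-Ta_{n-3}$ with $a_1=a_2=1$.
\item Running this recurrence backwards from $a_r=a_{r+1}=0$ forces strictly alternating signs. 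This contradicts $a_1=a_2$ for $r\ge 5$; the cases $r=2,3$ are checked directly, and $r=4$ is excluded.
\end{itemize}
You already wrote down $p_m=e_1p_{m-1}+e_3p_{m-3}$. What is missing is the reality and positivity of $T$, which is what makes the sign argument possible.

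Your resultant fallback does not replace this. Locating the roots of a resultant whose degree grows with $r$ is the whole difficulty, and you give no method for doing it.
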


\begin{proof}
Let $\omega$ be a primitive cube root of unity. If $r\equiv1\pmod3$, then
$$
[1:\omega:\omega^2]
$$
is a common zero of $p_r,p_{r+1},e_1$ and also of $p_r,p_{r+1},e_2$.
Thus neither sequence is regular.

Assume from now on that
$$
r\not\equiv1\pmod3.
$$

\smallskip
\noindent\emph{The case $e_1=0$.}
Suppose that $[x:y:z]$ is a common zero of $p_r,p_{r+1},e_1$. By
Lemma~\ref{lem:torus}, $xyz\neq0$. Put $u=x^r$, $v=y^r$, $w=z^r$. Then
$$
u+v+w=0,\qquad xu+yv+zw=0.
$$
These are two independent linear equations in $u,v,w$, and their common
solution space is spanned by $(y-z,z-x,x-y)$. Hence
$$
(x^r,y^r,z^r)=\lambda(y-z,z-x,x-y)
$$
for some $\lambda\neq0$.

After permuting the coordinates, assume
$$
|x|\geq|y|\geq|z|.
$$
Since $x+y+z=0$,
$$
|z-x|^2-|y-z|^2=3(|x|^2-|y|^2)\geq0.
$$
Therefore
$$
\frac{|x|^r}{|y|^r}=\frac{|y-z|}{|z-x|}\leq1
$$
and hence $|x|=|y|$. In the same way,
$$
|x-y|^2-|z-x|^2=3(|y|^2-|z|^2)\geq0
$$
so
$$
\frac{|y|^r}{|z|^r}=\frac{|z-x|}{|x-y|}\leq1
$$
and hence $|y|=|z|$. Thus
$$
|x|=|y|=|z|.
$$
Scale so that $x=1$. Then $|y|=|z|=1$ and $1+y+z=0$. Hence
$|1+y|=1$, so $\operatorname{Re}y=-1/2$. Thus $y$ is a primitive cube root
of unity and, up to permutation,
$$
[x:y:z]=[1:\omega:\omega^2].
$$
At this point
$$
p_m=1+\omega^m+\omega^{2m}
$$
which vanishes exactly when $3\nmid m$. Hence $p_r$ and $p_{r+1}$ can both
vanish only when $r\equiv1\pmod3$, a contradiction.

\smallskip
\noindent\emph{The case $e_2=0$.}
Suppose that $[x:y:z]$ is a common zero of $p_r,p_{r+1},e_2$. Again
$xyz\neq0$. By the first part, $e_1\neq0$.

Permute the coordinates so that
$$
|x|\geq|y|\geq|z|
$$
and scale so that $x=1$. Put $t=y$. From $e_2=0$,
$$
z=-\frac{t}{1+t}.
$$
Thus, with $\rho=|t|$,
$$
0<\rho\leq1,\qquad |1+t|\geq1.
$$
Subtracting $zp_r$ from $p_{r+1}$ gives
$$
t^r=-\frac{2t+1}{t(t+2)},\qquad
\rho^{r+1}=\frac{|2t+1|}{|t+2|}.
$$
Since $|1+t|\geq1$ and $\rho=|t|$, we have
$\operatorname{Re}t\geq-\rho^2/2$. Hence

$$
1+\rho^2+4\operatorname{Re}t
\geq1-\rho^2\geq0.
$$
Since also $1-\rho^2\geq0$,

$$
|2t+1|^2-\rho^2|t+2|^2
=(1-\rho^2)(1+\rho^2+4\operatorname{Re}t)\geq0.
$$
Therefore

$$
|2t+1|\geq\rho|t+2|.
$$
Together with

$$
\rho^{r+1}=\frac{|2t+1|}{|t+2|}
$$
this gives $\rho^{r+1}\geq\rho$. Since $0<\rho\leq1$, we get
$\rho=1$.
Set
$$
s=|1+t|^2.
$$
Then
$$
e_1=\frac{1+t+t^2}{1+t},\qquad e_3=-\frac{t^2}{1+t}.
$$
Since $|t|=1$, we have $\overline t=t^{-1}$, and hence

$$
s=(1+t)(1+t^{-1})=\frac{(1+t)^2}{t}.
$$
Thus
$$
(1+t)^2=st,\qquad
1+t+t^2=(1+t)^2-t=t(s-1).
$$
Since $e_1\neq0$, the identity
$$
1+t+t^2=t(s-1)
$$
gives $s\neq1$. As $s\geq1$, we have $s>1$. Therefore
$$
T=-\frac{e_3}{e_1^3}
=\frac{(1+t)^2}{t(s-1)^3}
=\frac{s}{(s-1)^3}>0.
$$
Since $e_2=0$, Newton's identities give
$p_1=e_1$, $p_2=e_1^2$, $p_3=e_1^3+3e_3$, and
$$
p_n=e_1p_{n-1}+e_3p_{n-3}\qquad(n\geq4).
$$
Put $a_n=p_n/e_1^n$. Then
$a_1=a_2=1$, $a_3=1-3T$, and
$$
a_n=a_{n-1}-Ta_{n-3}\qquad(n\geq4).
$$

At our common zero, $a_r=a_{r+1}=0$. For $r=2$, this is impossible
since $a_2=1$. For $r=3$,
$a_3=1-3T$ and $a_4=1-4T$ cannot both vanish. The case $r=4$
is excluded by $r\not\equiv1\pmod3$. Thus $r\geq5$.

The recurrence at $r+1$ gives $a_{r-2}=0$. Let
$A=a_{r-1}$. Then $A\neq0$, since otherwise the recurrence read
backwards would give $a_2=0$.

For $0\leq j\leq r-1$, put $W_j=a_{r-j}/A$. Then
$$
W_0=0,\qquad W_1=1,\qquad W_2=0,
\qquad
W_j=\frac{W_{j-2}-W_{j-3}}{T}
\quad(3\leq j\leq r-1).
$$
Since $T>0$,
$$
W_3>0,\qquad W_4<0,\qquad W_5>0
$$
and the signs alternate from then on. Indeed, for $j\geq6$, the terms
$W_{j-2}$ and $W_{j-3}$ have opposite signs, so $W_j$ has the
same sign as $W_{j-2}$.
But
$$
W_{r-2}=\frac{a_2}{A}=\frac1A,
\qquad
W_{r-1}=\frac{a_1}{A}=\frac1A.
$$

Thus two consecutive nonzero terms are equal, while they must have
opposite signs, a contradiction.
\end{proof}

\section{Proof of the main result}

\begin{proof}[Proof of Theorem~\ref{thm:main}]
If $r\equiv1\pmod3$, then none of
$$
r,\qquad r+1,\qquad 3r+\varepsilon
$$
is divisible by three. Hence $[1:\omega:\omega^2]$ is a common zero of the
three power sums, so the sequence is not regular.

Now assume $r\not\equiv1\pmod3$ and let $[x:y:z]$ be a common zero of
$p_r,p_{r+1},p_{3r+\varepsilon}$. By Lemma~\ref{lem:torus}, $e_3\neq0$.
If $\varepsilon=1$, Proposition~\ref{prop:reductions} gives
$$
p_{3r+1}=e_1e_3^r
$$
on $V(p_r,p_{r+1})$, so $e_1=0$, contradicting
Proposition~\ref{prop:e1e2}. If $\varepsilon=2$, the same argument gives
$e_2=0$, again a contradiction. Thus there is no common projective zero, and
Lemma~\ref{lem:projective} proves the result.
\end{proof}

\begin{proof}[Proof of Corollary~\ref{cor:four-shifts}]
The cases $j=1,2$ are Theorem~\ref{thm:main}. If $j=0$ or $j=3$, then at any
common zero of $p_r,p_{r+1}$, Lemma~\ref{lem:torus} and
Proposition~\ref{prop:reductions} give
$$
p_{3r}=3e_3^r\neq0,\qquad p_{3r+3}=3e_3^{r+1}\neq0
$$
so both sequences are always regular.

Finally, $r(r+1)$ is always even. For $j=0,3$, the third exponent is divisible
by three. For $j=1,2$, one of the three exponents is divisible by three exactly
when $r\not\equiv1\pmod3$. This is exactly the stated divisibility condition.
\end{proof}

\section*{Declaration of competing interest}
The author declares that he has no known competing financial interests or
personal relationships that could have appeared to influence the work reported
in this paper.

\end{document}